\documentclass{article}
\usepackage{graphicx} 

\usepackage{amsthm}
\usepackage{amsmath}
\usepackage{amssymb}
\usepackage{thmtools}
\usepackage{thm-restate}
\usepackage[backend=biber,
style=alphabetic,
]{biblatex}
\DeclareFieldFormat{postnote}{#1}
\DeclareFieldFormat{multipostnote}{#1}
\usepackage{tikz}
\usepackage{circuitikz}
\usepackage{subfiles}
\usepackage{caption}
\DeclareCaptionFormat{custom}
{%
    \textbf{#1#2}\textit{\small #3}
}
\captionsetup{format=custom}

\DeclareFieldFormat{postnote}{#1}

\addbibresource{bibliography.bib}

\theoremstyle{definition}
\newtheorem{counter}{bad bad bad}[section]
\newtheorem{lemma}[counter]{Lemma}
\newtheorem{definition}[counter]{Definition}

\newtheorem{remark}[counter]{Remark}

\newtheorem{theorem}[counter]{Theorem}
\newtheorem{fact}[counter]{Fact}
\newtheorem{corollary}[counter]{Corollary}
\newtheorem{hypothesis}[counter]{Hypothesis}

\newenvironment{customcor}[1]
  {\innercustomthm}
  {\endinnercustomthm}

\newcommand{\K}{\mathcal{K}}
\newcommand{\lang}{\operatorname{L}}
\newcommand{\lk}{\preccurlyeq}

\newcommand{\ba}{\mathbf{a}}
\newcommand{\bb}{\mathbf{b}}
\newcommand{\bx}{\mathbf{x}}
\newcommand{\gS}{\operatorname{S}}

\newcommand{\cof}{\text{cf}}

\newcommand{\mon}{\mathfrak{C}}

\newcommand{\tp}{\operatorname{tp}}

\newbox\noforkbox \newdimen\forklinewidth
\forklinewidth=0.3pt
\setbox0\hbox{$\textstyle\smile$}
\setbox1\hbox to \wd0{\hfil\vrule width \forklinewidth depth-2pt
 height 10pt \hfil}
\wd1=0 cm
\setbox\noforkbox\hbox{\lower 2pt\box1\lower 2pt\box0\relax}
\def\unionstick{\mathop{\copy\noforkbox}\limits}

\def\nonfork_#1{\unionstick_{\textstyle #1}}

\setbox0\hbox{$\textstyle\smile$}
\setbox1\hbox to \wd0{\hfil{\sl /\/}\hfil}
\setbox2\hbox to \wd0{\hfil\vrule height 10pt depth -2pt width
              \forklinewidth\hfil}
\wd1=0 cm
\wd2=0 cm
\newbox\doesforkbox
\setbox\doesforkbox\hbox{\lower 2pt\box1 \lower 2pt\box2\lower2pt\box0\relax}
\def\nunionstick{\mathop{\copy\doesforkbox}\limits}

\def\fork_#1{\nunionstick_{\textstyle #1}}

\newcommand{\dnf}{\unionstick}

\title{The spectrum of limit models in a first order setting}
\author{Jeremy Beard}
\date{}

\begin{document}

\maketitle

\begin{abstract}
    Originally introduced by Kolmann and Shelah as a surrogate for saturated models \cite{kosh}, limit models have been established as natural and useful objects when studying abstract elementary classes. In \cite{shelah2006first} Shelah began the study of when (multiple notions of) limit models exist for first order theories. In this paper we look at their structure.
    
    In superstable theories it is known that all limit models are isomorphic \cite{van06}, but in the strictly stable case the number of non-isomorphic limit models was not well understood. Here we characterise the full spectrum of limit models in the first order stable setting, by a short and simple argument using only the familiar machinery of stable first order theories:

    \begin{customcor}{\ref{fo-main-result}}\label{maintheorem}
        Let $T$ be a complete $\lambda$-stable theory where $\lambda \geq |\lang(T)| + \aleph_0$. Let $\delta_1, \delta_2 < \lambda^+$ be limit ordinals where $\cof(\delta_1)< \cof(\delta_2)$. Let $N_l$ be a $(\lambda, \delta_l)$-limit model for $l = 1, 2$. Then $N_1$ and $N_2$ are isomorphic if and only if $\cof(\delta_1) \geq \kappa_r(T)$. 

        Moreover, if $\kappa_r(T) = \aleph_\alpha$, there are exactly $|\alpha| + 1$ limit models up to isomorphism.
    \end{customcor}

    In the context of first order stable theories, this reduces the proof of the main result of \cite{bema} from 19 pages to 2. We hope this will make limit models a more comprehensible and accessible tool in first order model theory.
\end{abstract}

\section{Introduction}

In the study of abstract elementary classes (AECs), limit models have proved useful in solving approximations of Shelah's categoricity conjecture, which proposes a generalisation of Morley's categoricity theorem holds in all AECs \cite[Question 6.14(3)]{sh702}. More specifically, proving that (high cofinality) limit models are isomorphic allows one to develop independence relations with similar properties to first order forking \cite[2.8, 2.13]{vas19}. Although limit models were introduced as an analogue for partially saturated models in AECs (see Fact \ref{long-limits-are-saturated}), they are still interesting in a first order setting; Shelah began the study of several different notions of limit model in a first order context in \cite{shelah2006first}, and an example of non-isomorphic limit models was first found in the first order context \cite[6.1]{gvv}.

Given two models $M \preccurlyeq N$ of the same size $\lambda$, $N$ is a \emph{universal extension} of $M$ if it contains a copy of every other size $\lambda$ elementary extension of $M$ via an embedding fixing $M$ (see Definition \ref{limit-def}(1)). A $(\lambda, \delta)$-limit model is the union of an increasing $\delta$-length chain of models universally extending one another, all of size $\lambda$ (see Definition \ref{limit-def}(2)). As mentioned above, finding contexts in which two $\lambda$-limit models are isomorphic is important and thoroughly studied in AECs \cite{shvi99}, \cite[II.11.2]{van02}, \cite[Theorem 1]{van16b}, \cite{bovan}, \cite[2.7]{vas19}. 

The goal of this paper is to present, along with the relevant background on limit models, a short and simple argument characterising the spectrum of limit models of a first order stable theory, for a reader familiar with first order model theory but not necessarily AECs.

\begin{customcor}{\ref{fo-main-result}}\label{maintheorem}
    Let $T$ be a complete $\lambda$-stable theory where $\lambda \geq |\lang(T)| + \aleph_0$. Let $\delta_1, \delta_2 < \lambda^+$ be limit ordinals where $\cof(\delta_1)< \cof(\delta_2)$. Let $N_l$ be a $(\lambda, \delta_l)$-limit model for $l = 1, 2$. Then $N_1$ and $N_2$ are isomorphic if and only if $\cof(\delta_1) \geq \kappa_r(T)$.
\end{customcor}

That is, `long limits are isomorphic, and short limits are non-isomorphic'. Since whenever $\cof(\delta_1)=\cof(\delta_2)$ the limits are isomorphic (see Fact \ref{cfiso}), this gives us a complete understanding of the spectrum of limit models.

With this goal in mind, we present all definitions and results in first order terminology and avoid the language of AECs. For those with interests outside of first order logic, a broader introduction to limit models, along with a more detailed history and a related family of results, can be found in \cite{bema}, but these are entirely unnecessary to understand this paper.

In this first order context, the `long limits are isomorphic' argument (Lemma \ref{fo-long-lims-are-iso}) is straightforward and requires us only to show that when $\cof(\delta) \geq \kappa_r(T)$, the $(\lambda, \delta)$-limit models are saturated. The `short limits are non-isomorphic' argument (Theorem \ref{fo-short-models-are-distinct}) develops the method from \cite[6.1]{gvv}, but rather than showing only that the $(\lambda, \omega)$-limit model is not isomorphic to any of the $(\lambda, \geq \kappa_r(T))$-limit models, we show in fact that all $(\lambda, \mu)$-limit models are non-isomorphic for regular $\mu \leq \kappa_r(T)$. This involves building a chain of models around a \emph{uniform $\mu$-tree} (see Definition \ref{uniform-trees}), which captures a type over a domain of size $\mu$ that the final model, a $(\lambda, \mu)$-limit model, cannot realise, witnessing a failure of $\mu^+$-saturation. This is enough, as the $(\lambda, \theta)$-limit model will be $\theta$-saturated for regular $\theta < \lambda^+$ (see Fact \ref{long-limits-are-saturated}).

Section 2 provides the necessary background. We include sketches of proofs of some elementary results, in the hope that these will be useful for readers that would rather avoid learning the background of AECs necessary to understand the proofs we cite. Section 3 presents the main arguments we described above.

This paper was written in parallel with \cite{bema}, which is also concerned with the spectrum of limit models. While the main result \cite[5.1]{bema} from that paper is more general than Corollary \ref{maintheorem}, the proofs are long and technical - showing `long limits are isomorphic' requires the development of a deep theory of \emph{towers} of models, and showing `short limits are non-isomorphic' involves proving a canonicity result for $\lambda$-non-splitting. To then apply the general result to the first order stable context in \cite[6.22(4)]{bema}, additional work is needed to show that the `cutoff' value ($\kappa(\dnf_f, \operatorname{Mod(T)}_\lambda, \preccurlyeq^u)$ in the notation of \cite{bema}) is $\kappa_r(T)$ \cite[5.22, 6.11]{bema}. Using the familiar machinery of first order stable theories described above, we reduce the technical 19 page proof of \cite{bema} down to just 2 pages. This continues the study of limit models in the first order setting initiated by Shelah \cite{shelah2006first}, and we hope this paper will make the topic more accessible to model theorists with little to no background in AECs.

This paper was written while the author was working on a Ph.D. under the direction of Rami Grossberg at Carnegie Mellon University and I would like to thank Professor Grossberg for his guidance and assistance in my research in general and in this work in particular. I thank Marcos Mazari-Armida for his guidance and advice while this work was an appendix of \cite{bema}. I also thank John Baldwin, Samson Leung, and Wentao Yang for helpful comments regarding the presentation of this paper.

\section{Preliminaries}

We assume some background knowledge of first order model theory, in particular some familiarity with the notions of stable theories, $\mu$-saturated models, and first order non-forking.

We typically use $T$ to denote a first order theory, $\lambda, \mu, \theta$ for cardinals, $\alpha, \beta, \delta$ for ordinals, $\eta, \nu$ for elements of trees. Given sequence $\eta$, $\eta^\wedge i$ is the sequence formed by appending $i$ to the end of $\eta$. We use $\mon$ for the monster model of a complete theory. Given a theory $T$, $\lang(T)$ denotes the language of $T$, and we use the shorthand $|T| = |\lang(T)| + \aleph_0$. We use $M, N$ for models. The model $M$ has universe $|M|$ and cardinality $\|M\|$. We use $M \subseteq N$ if $M$ is a submodel of $N$, and $M \preccurlyeq N$ if $M$ is an elementary substructure of $N$. $a, b, c, d$ will denote singleton elements of models, and $\ba, \bb$ will be tuples of elements (possibly singletons). Similarly use $x, y$ and $\bx$ for variables. We use $\varphi$ for formulas. We use standard abuses of notation such as $a \in M$ or $\ba \in M$. The type of $\ba$ over a set $A$ in $N$ is denoted $\tp(\ba/A, N)$; when working in a monster model $\mon$, $\tp(\ba/A) = \tp(\ba/A, \mon)$. The set of $n$-types over a set $A$ is $\gS^n(A)$, and $\gS(A) = \bigcup_{n<\omega} \gS^n(A)$. A model $M$ is $\mu$-saturated if it realises all 1-types (or equivelently all $(<\omega)$-types) over subsets of size $<\mu$; $M$ is saturated if it is $\|M\|$-saturated. A theory $T$ is $\lambda$-stable if the number of 1-types (or equivelently $(<\omega)$-types) over a set of size $\leq \lambda$ is at most $\lambda$.

This section will be a review of the known theory of limit models. Limit models have traditionally been studied in the broader context of AECs. Since this paper is aimed at first order models theorists, we will only define the necessary notions in the restricted first order setting. The `new' content of this paper is entirely in Section \ref{maintheoremsection}.

For the rest of this section, we assume the following:

\begin{hypothesis}
    $T$ is a complete first order theory with infinite models, and $\lambda$ is a fixed infinite cardinal where $\lambda \geq \|T\|$.
\end{hypothesis} 

\begin{definition}[{\cite{kosh}}]\label{limit-def}

    \begin{enumerate}
        \item Suppose $M, N \models T$ and $\|M\| = \|N\|$. We say $N$ is \emph{universal over $M$} provided $M \lk N$ and for all $M' \in \K$ where $M \lk M'$ and $\|M'\|  = \|N\|$, there exists an elementary embedding $f : M' \rightarrow N$ that fixes $M$. We write $M \lk^u N$ as a shorthand.
    
        \item Suppose $\delta$ is a limit ordinal where $\delta < \lambda^+$. Given $M, N \models T$ with $\|M\| = \|N\| = \lambda$, we say $N$ is a \emph{$(\lambda, \delta)$-limit model over $M$} provided there exists an increasing continuous sequence of models $\langle M_\alpha : \alpha \leq \delta\rangle$ such that $M_0 = M$, $M_\delta = N$ and $M_{\alpha + 1}$ is universal over $M_\alpha$ for all $\alpha < \delta$. We may call $\langle M_\alpha : \alpha \leq \delta\rangle$ a \emph{witness} of this property.

        \item A \emph{$(\lambda, \delta)$-limit model} is a $(\lambda, \delta)$-limit model over some $M \in K_\lambda$.
    
        \item A \emph{$\lambda$-limit model} is a $(\lambda, \delta)$-limit model for some limit $\delta < \lambda^+$. When $\lambda$ is clear from context, we may omit it.
    \end{enumerate}
    
\end{definition}

\begin{remark}
    If $N$ is a limit model over $M$, then $N$ is universal over $M$.
\end{remark}

\begin{remark}\label{alternative-universal}
    The definition of when $N$ is universal over $M$ is equivalent to: for all $f : M \rightarrow N'$, there exists $g : N' \rightarrow N$ with $f^{-1} \subseteq g$ (taking the inverse on the image). This is clearly sufficient (take $f = id_M$). And this is necessary, as given such $f$, you can extend $f$ to an isomorphism $\bar{f} : M' \cong N'$, apply that $N$ is universal over $M$ to get a map $g^*:M' \rightarrow N$, and then take $g = g^* \circ \bar{f}^{-1} : N' \rightarrow N$, which extends $f^{-1}$ as $g^*$ fixes $M$.
\end{remark}

\begin{remark}\label{lims-exist-implies-stable}
    Suppose that for all $M \models T$ with $\|M\| = \lambda$, there exists $N \models M$ universal over $M$ (with $\|N\| = \|M\|$). Then $T$ is stable in $\lambda$, since each such $N$ realises all (1-ary) types over $M$.
\end{remark}

The converse to Remark \ref{lims-exist-implies-stable} is also true. We sketch a proof since it is typically presented in the language of AECs.

\begin{fact}[{\cite[2.9]{grva06a}}]\label{limits-exist}
    Suppose $T$ is stable in $\lambda$. For any $M \models T$ where $\|M\| = \lambda$, there exists $N \models T$ universal over $M$.
    
    Moreover, for any $M \models T$ where $\|M\| = \lambda$ and limit $\delta < \lambda^+$, there exists $N \models T$ such that $N$ is a $(\lambda, \delta)$-limit over $M$.
\end{fact}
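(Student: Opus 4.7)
The plan is to prove the first part by building $N$ as the union of an elementary chain of length $\lambda$, where each successor stage realises all $1$-types over the previous one. Because $T$ is stable in $\lambda$, any size-$\lambda$ model has at most $\lambda$ many $1$-types over it, so such a realising extension of size $\lambda$ always exists. I would construct $\langle N_i : i \leq \lambda\rangle$ continuously with $N_0 = M$, each $\|N_i\| = \lambda$, and $N_{i+1}$ realising every $p \in \gS^1(N_i)$, and then take $N := N_\lambda$, which is still of size $\lambda$.

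To verify that $N$ is universal over $M$, I would consider $M' \succcurlyeq M$ with $\|M'\| = \lambda$, enumerate $|M'| \setminus |M|$ as $\{a_i : i < \lambda\}$, and build an elementary embedding $f : M' \to N$ fixing $M$ by induction on $i$. At stage $i$, I would choose $f(a_i) \in N_{i+1}$ to realise the image under $f$ of $\tp(a_i / M \cup \{a_j : j < i\})$. The key bookkeeping observation is that $f(M \cup \{a_j : j < i\})$ lies inside $N_i$, since each earlier $f(a_j)$ was chosen in $N_{j+1} \subseteq N_i$; so the type to be realised is a $1$-type over a subset of $N_i$, which is realised somewhere in $N_{i+1}$ by construction.

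For the moreover clause I would iterate the first part along $\delta$: set $M_0 = M$, use the first part to pick $M_{\alpha+1}$ universal over $M_\alpha$ at successors, and take unions at limits. Each $M_\alpha$ remains of size $\lambda$ because $\delta < \lambda^+$ forces every initial segment to have cardinality at most $\lambda$, so $\langle M_\alpha : \alpha \leq \delta\rangle$ witnesses that $M_\delta$ is a $(\lambda, \delta)$-limit model over $M$. The step I expect to be most delicate is the bookkeeping in the embedding argument for the first part; without the chain structure, a single $\lambda$-saturated model of size $\lambda$ would not suffice, since the types we must realise live over sets of size $\lambda$. Once the stage-$i$ type is arranged to live over a subset already captured by $N_i$, the remaining verifications are routine.
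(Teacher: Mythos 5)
Your proposal is correct and follows essentially the same route as the paper: build a continuous chain of length $\lambda$ whose successor stages realise all types over the previous stage (possible by $\lambda$-stability), embed any extension $M'$ element-by-element into the chain so that the stage-$i$ type has its image inside $N_i$ and is therefore realised in $N_{i+1}$, and iterate universal extensions along $\delta$ for the moreover clause. The only cosmetic difference is that the paper enumerates all of $M'$ rather than $|M'|\setminus|M|$; nothing of substance changes.
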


\begin{proof}
    Work in a monster model $\mon$. Using stability, we can take a continuous increasing sequence of elementary extensions $\langle M_i : i < \lambda \rangle$ with $M_0 = M$ and where $M_{i+1}$ realises all types over $M_i$. Let $N = \bigcup_{i<\lambda} M_i$. We claim $N$ is universal over $M$.

    Suppose $N' \models T$ with $M \preccurlyeq N'$ and $\|N'\| = \lambda$. Enumerate $N = \{a_i : i < \lambda\}$. By induction define $(N', N)$-mappings $f_i : M \cup \{a_r : r < i \} \rightarrow M_i$. To do this, set $f_0 = id_M$. At limit $i<\lambda$, set $f_i = \bigcup_{r<i} f_r$. For successors, since $M_{i+1}$ realises $f_i(\tp(a_i/M \cup \{a_r : i < r\})$, let $f_{i+1}(a_i)$ be any realisation of this type. This completes the construction, and $\bigcup_{i < \lambda} : N' \rightarrow N$. Therefore $N$ is universal over $M$.

    If $\delta < \lambda^+$ is a limit model, use the first part to define a $\preccurlyeq^u$-increasing continuous chain $\langle M_i : i < \delta \rangle$ with $M_0 = M$. Then $\bigcup_{i < \delta} M_i$ is a $(\lambda, \delta)$-limit model over $M$.
\end{proof}

The following fact tells us that to understand the spectrum of limit models, we only need to consider $(\lambda, \delta)$-limit models when $\delta$ is regular, and that for each regular cardinal the model is unique. The proof is a straightforward back-and-forth argument along a cofinal sequence of the witnessing sequences of models - we sketch it for the convenience of those unfamiliar with limit models.

\begin{fact}[{\cite[Fact 1.3.6]{shvi99}}]\label{cfiso}
    Suppose $T$ is complete. Suppose $\delta_1, \delta_2 < \lambda^+$ are limit ordinals and $\cof(\delta_1) = \cof(\delta_2)$. If $M$ is a $(\lambda, \delta_1)$-limit and $N$ is a $(\lambda, \delta_2)$-limit, then $M \cong N$.
\end{fact}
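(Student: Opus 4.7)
Let $\theta = \cof(\delta_1) = \cof(\delta_2)$, a regular infinite cardinal. My plan is first to reduce to the case $\delta_1 = \delta_2 = \theta$, and then to construct an isomorphism $M \cong N$ by back-and-forth along the two $\theta$-length witnesses, using the universality of consecutive witness models at each step.

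For the reduction, given a witness $\langle M_\alpha : \alpha \leq \delta_1 \rangle$ of $M$, I would pick a continuous strictly increasing cofinal sequence $\langle \alpha_i : i \leq \theta \rangle$ in $\delta_1$ with $\alpha_0 = 0$ and $\alpha_\theta = \delta_1$, and restrict to $\langle M_{\alpha_i} : i \leq \theta \rangle$. This still witnesses $M$ as a limit: at each successor step the sandwich $M_{\alpha_i} \lk^u M_{\alpha_i + 1} \lk M_{\alpha_{i+1}}$ gives universality of $M_{\alpha_{i+1}}$ over $M_{\alpha_i}$, since any $\lambda$-extension of $M_{\alpha_i}$ embeds into $M_{\alpha_i+1}$ fixing $M_{\alpha_i}$ and then into $M_{\alpha_{i+1}}$ by inclusion. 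Applying the same to $N$, I may assume the witnesses have length $\theta$, written $\langle M_i : i \leq \theta\rangle$ and $\langle N_i : i \leq \theta\rangle$.

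For the back-and-forth, working in a monster model $\mon$ containing $M$ and $N$, I would recursively build a continuous $\lk$-increasing chain of triples $(P_i, Q_i, \phi_i)$ for $i \leq \theta$, where $\phi_i : P_i \to Q_i$ is an isomorphism, $P_i \lk M$ and $Q_i \lk N$ have size $\lambda$, together with indices $k_i, j_i < \theta$ satisfying $P_i \lk M_{k_i}$, $Q_i \lk N_{j_i}$, $M_i \lk P_i$ and $N_i \lk Q_i$. At a successor stage $i + 1$, I pick $k_{i+1} \geq \max(k_i, i) + 1$ and $j_{i+1} \geq \max(j_i, i) + 1$, so that $M_{k_{i+1}} \supseteq P_i \cup M_{i+1}$ and $N_{j_{i+1}} \supseteq Q_i \cup N_{i+1}$; I amalgamate $M_{k_{i+1}}$ and $N_{j_{i+1}}$ over the isomorphism $\phi_i$ inside $\mon$ to obtain a common elementary extension $R$ of size $\lambda$; and then I use the universality of $M_{k_{i+1}+1}$ over $M_{k_{i+1}}$ (Remark \ref{alternative-universal}) to embed $R$ into $M_{k_{i+1}+1}$ fixing $M_{k_{i+1}}$, and symmetrically embed $R$ into $N_{j_{i+1}+1}$ fixing $N_{j_{i+1}}$. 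The images of these two embeddings become $P_{i+1}$ and $Q_{i+1}$, and $\phi_{i+1}$ is the isomorphism identifying them via $R$; the base case $i = 0$ is handled the same way by amalgamating $M_0$ and $N_0$ over $\emptyset$ and embedding into $M_1$ and $N_1$ using their universality.

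The main obstacle is ensuring at each stage that the amalgam $R$ can be embedded back into both $M$ and $N$, since we only have universality of successors in the given witnesses; this is exactly why I maintain the invariants $P_i \lk M_{k_i}$ and $Q_i \lk N_{j_i}$, which keep us in range of the universality of $M_{k_{i+1}+1}$ over $M_{k_{i+1}}$ and $N_{j_{i+1}+1}$ over $N_{j_{i+1}}$. At limit stages $\alpha < \theta$ I take unions, and regularity of $\theta$ keeps $\sup_{i<\alpha} k_i$ and $\sup_{i<\alpha} j_i$ below $\theta$, so the invariants pass through by continuity of both witness chains. At the top, the condition $M_i \lk P_i$ for all $i < \theta$ forces $\bigcup_i P_i = M$, and symmetrically $\bigcup_i Q_i = N$, so $\bigcup_i \phi_i$ is the desired isomorphism $M \cong N$.
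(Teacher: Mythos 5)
Your proposal is correct and follows essentially the same route as the paper: reduce to witnesses of regular length $\theta$ by passing to continuous cofinal subsequences, then run a back-and-forth along the two chains using universality of successor models. The only cosmetic difference is that you amalgamate in the monster and embed the amalgam into both chains at each successor step, whereas the paper's proof amalgamates only once at the start and thereafter just flips the partial embeddings back and forth via Remark \ref{alternative-universal}.
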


\begin{proof}
    Let $\langle M_i : i \leq \delta_1 \rangle$, $\langle N_i : i \leq \delta_2 \rangle$ be $\preccurlyeq^u$-increasing continuous sequences with $M_{\delta_1} = M$, $N_{\delta_2} = N$. Taking cofinal sequences, without loss of generality $\delta_1 = \delta_2 = \delta$ is regular. There exists some $M^*$ with $M_0 \preccurlyeq M^*$ and $f^* : N_0 \rightarrow M^*$ an elementary embedding. As $M_0 \preccurlyeq^u M_1$, there exists $\bar{f} : M^* \rightarrow M^1$ fixing $M_0$. Let $f_0 = \bar{f} \circ f^* : N_0 \rightarrow M_1$. Now by universality of $N_1$ over $N_0$, by Remark \ref{alternative-universal} there exists $g_1 : M_1 \rightarrow N_1$ where $f_0^{-1} \subseteq g_1$ (taking the inverse on the image). Again by Remark \ref{alternative-universal}, you can find $f_1:N_1 \rightarrow M_2$ with $(g_1)^{-1} \subseteq f_1$. Continue this process: by induction define continuous and $\subseteq$-increasing sequences of elementary embeddings $f_i : N_i \rightarrow M_{i+1}$ and $g_i : M_i \rightarrow N_i$ for $i \in [1, \delta)$ where $f_r^{-1} \subseteq g_i$ for $r < i$ and $g_i^{-1} \subseteq f_i$ for all $i < \delta$. Do this by taking $g_i = \bigcup_{r<i} g_r$ at limit $i < \delta$, and using Remark \ref{alternative-universal} at all other stages as before. At the end, we have that $f = \bigcup_{i<\delta} f_i : N \rightarrow M$ and $g = \bigcup_{i<\delta} g_i : M \rightarrow N$ are inverses of one another, so $g : M \cong N$ as desired.

    \begin{figure}[!ht]
\centering

\begin{circuitikz}
\tikzstyle{every node}=[font=\normalsize]
\node [font=\normalsize] at (3.75,7.5) {$N_0$};
\node [font=\normalsize] at (3.75,10) {$M_0$};
\node [font=\normalsize] at (5,8.75) {$M^*$};
\draw [->, >=Stealth] (4,9.75) -- (4.75,9);
\draw [->, >=Stealth] (4,7.75) -- (4.75,8.5);
\draw [->, >=Stealth] (4,10) -- (6,10);
\draw [->, >=Stealth] (4,7.5) -- (6,7.5);
\draw [->, >=Stealth] (5.25,9) -- (6,9.75);
\node [font=\normalsize] at (6.25,10) {$M_1$};
\draw [->, >=Stealth] (6.25,9.75) -- (6.25,7.75);
\node [font=\normalsize] at (6.25,7.5) {$N_1$};
\draw [->, >=Stealth] (6.5,7.5) -- (8.5,7.5);
\draw [->, >=Stealth] (6.5,10) -- (8.5,10);
\node [font=\normalsize] at (8.75,10) {$M_2$};
\draw [->, >=Stealth] (9,10) -- (11,10);
\node [font=\normalsize] at (8.75,7.5) {$N_2$};
\draw [->, >=Stealth] (8.75,9.75) -- (8.75,7.75);
\draw [->, >=Stealth] (9,7.5) -- (11,7.5);
\draw [->, >=Stealth] (6.5,7.75) -- (8.5,9.75);
\draw [->, >=Stealth] (9,7.75) -- (11,9.75);
\node [font=\normalsize] at (11.5,10) {$\dots$};
\node [font=\normalsize] at (12.5,10) {$M$};
\draw [->, >=Stealth] (12.25,7.75) -- (12.25,9.75);
\node [font=\normalsize] at (12.5,7.5) {$N$};
\draw [->, >=Stealth] (12.75,9.75) -- (12.75,7.75);
\node [font=\normalsize] at (4.75,8) {$f^*$};
\node [font=\normalsize] at (5.75,9) {$\bar{f}$};
\node [font=\normalsize] at (6.5,8.75) {$g_1$};
\node [font=\normalsize] at (8,8.75) {$f_1$};
\node [font=\normalsize] at (9,8.75) {$g_2$};
\node [font=\normalsize] at (10.5,8.75) {$f_2$};
\node [font=\normalsize] at (11.5,7.5) {$\dots$};
\node [font=\normalsize] at (12,8.75) {$f$};
\node [font=\normalsize] at (13,8.75) {$g$};
\end{circuitikz}

\label{back-and-forth-diagram}
\end{figure}
\end{proof}

The following illustrates part of the relationship between limit models and saturated models.

\begin{fact}\label{long-limits-are-saturated}
    Suppose $M \models T$ and $\mu < \lambda^+$ is an infinite regular cardinal. If $M$ is a $(\lambda, \mu)$-limit model, then $M$ is $\mu$-saturated.
\end{fact}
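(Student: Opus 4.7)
The plan is to directly unpack the definition of $\mu$-saturation using the witnessing chain and the universality of each successor step. Fix a witnessing sequence $\langle M_i : i \leq \mu \rangle$ with $M_\mu = M$, $\|M_i\| = \lambda$, and $M_i \lk^u M_{i+1}$ for each $i < \mu$. Take any set $A \subseteq M$ with $|A| < \mu$ and any type $p \in \gS(A)$; the goal is to realise $p$ inside $M$.

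The first step is to locate $A$ inside a bounded stage of the chain. Since $\mu$ is regular, $|A| < \mu$, and the chain is continuous with union $M$, a standard cofinality argument shows that $A \subseteq M_i$ for some $i < \mu$. Extend $p$ to a complete type $p' \in \gS(M_i)$; any realisation of $p'$ will in particular realise $p$, so it suffices to realise $p'$ in $M$.

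The second step is to realise $p'$ in the next stage $M_{i+1}$ using universality. Work inside the monster model $\mon$ and pick any realisation $a \in \mon$ of $p'$; by downward L\"owenheim--Skolem (using $\lambda \geq |T|$) there exists $M' \models T$ with $M_i \cup \{a\} \subseteq M' \preccurlyeq \mon$ and $\|M'\| = \lambda$. Since $M_i \lk^u M_{i+1}$, there is an elementary embedding $f : M' \rightarrow M_{i+1}$ fixing $M_i$ pointwise. Then $f(a) \in M_{i+1}$ satisfies $f(\tp(a/M_i)) = \tp(a/M_i) = p'$, so $f(a)$ realises $p'$ and hence $p$ in $M_{i+1} \preccurlyeq M$, completing the proof.

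I do not expect any significant obstacle: the only place the hypothesis on $\mu$ is used is the regularity appeal to force $A$ inside a single $M_i$, and the only place the hypothesis on $\lambda$ is used is the L\"owenheim--Skolem step to ensure the auxiliary model $M'$ has the right cardinality so that universality of $M_{i+1}$ over $M_i$ applies. The argument is essentially a one-step invocation of universality, reminiscent of (and simpler than) the back-and-forth in Fact \ref{cfiso}.
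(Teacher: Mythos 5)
Your proposal is correct and follows exactly the paper's argument: locate $A$ inside some $M_i$ via regularity of $\mu$, then use universality of $M_{i+1}$ over $M_i$ to realise the type. You simply spell out the details (extending $p$ to a complete type over $M_i$ and building the auxiliary model $M'$ via L\"owenheim--Skolem) that the paper's two-line proof leaves implicit.
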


\begin{proof}
    Let $\langle M_i : i \leq \mu \rangle$ witness that $M$ is a $(\lambda, \mu)$-limit model. Suppose $A \subseteq |M|$ and $|A| < \mu$ and $p \in \gS(A)$. Then $A \subseteq M_i$ for some $i<\mu$. Since $M_i \preccurlyeq^u M_{i+1}$, $p$ is realised in $M_{i+1}$, hence in $M$.
\end{proof}

We now recall some concepts and facts about first order stable theories relevant to our main arguments.

\begin{definition}[{\cite[III.3.1]{sh:c}}]\label{fo-kappa-def}
    Given a first order complete theory $T$, $\kappa(T)$ is the least $\kappa$ such that for all $\subseteq$-increasing sequences of sets $\langle A_i : i \leq \kappa\rangle$, and all $p \in \gS(\bigcup_{i<\kappa} A_i)$, there exists $i < \kappa$ such that $p$ does not fork over $A_i$. If no such $\kappa$ exists, $\kappa(T) = \infty$.

    $\kappa_r(T)$ is the first regular $\kappa \geq \kappa(T)$ which is regular (so $\kappa(T)$ or $\kappa(T)^+$), or $\infty$ if $\kappa(T) = \infty$.
\end{definition}

Shelah's original definition was slightly different, but they are equivalent by a short argument using \cite[III.3.2]{sh:c} and monotonicity of non-forking. Observe that for $\mu$ regular, $\mu < \kappa(T)$ if and only if $\mu < \kappa_r(T)$.

\begin{definition}[{\cite[Definition VII 3.2(A)]{sh:c}}]\label{uniform-trees}
    We say $\{ \varphi_\alpha : \alpha < \beta, \alpha$ is a successor ordinal $\}$ with $\{\ba_\eta : \eta \in {}^{\leq \beta}\theta\}$ is a \emph{uniform $\beta$-tree} provided that 
    \begin{enumerate}
        \item $\varphi_\alpha$ is a formula for all successor $\alpha < \beta$, and $\theta$ is an infinite ordinal
        \item $\mon \models \varphi_\alpha(\ba_\eta, \ba_{\eta\upharpoonright \alpha})$ for all successor $\alpha < \beta$ and all $\eta \in {}^{\beta}\theta$
        \item For all $\alpha$ such that $\alpha + 1 < \beta$, $\eta \in {}^\alpha \theta$, and $\nu \in {}^\beta \theta$, $\ba_\nu$ realises at most $1$ of the formulas $\varphi_{\alpha + 1}(\bx, \ba_{\eta^{\wedge} i})$ where $i < \theta$
    \end{enumerate}
    
\end{definition}

\begin{remark}
	Some notes on the definition:
	\begin{enumerate}
		\item If a uniform $\beta$-tree exists for one (infinite) $\theta$, by compactness there is a uniform $\beta$-tree for any other $\theta$ with the same $\varphi_\alpha$'s. 
		\item For zero and limit $\alpha<\beta$, it doesn't matter what $\ba_\nu$ is for all $\nu \in {}^\alpha \theta$.
		\item As a result of conditions (2) and (3), for $\eta \in 2^\beta$ and $\alpha+1 < \beta$, we have $\mon \models \varphi_{\alpha+1}(\ba_\eta, \ba_{\eta \upharpoonright \alpha ^\wedge i})$ if and only if $\eta(\alpha) = i$. In fact, this is the only role of conditions (2) and (3) in our argument for Theorem \ref{fo-short-models-are-distinct}; see equation ($\dagger$) in the proof.
		\item Shelah's original definition is slightly broader: condition (3) is weakened to say that for all successor $\alpha < \beta$, there exist $m_\alpha \in \omega$ such that at most $m_\alpha$ of the formulas are realised. Since our results only require a tree with $m_\alpha = 1$ for all appropriate $\alpha$, which we can guarantee exists by {\cite[VII 3.7(2)]{sh:c}} cited below, we restrict to this case for simplicity.
	\end{enumerate}
\end{remark}

When superstability fails (that is, $\kappa(T) > \aleph_0$), the nicest possible forms of uniform $\beta$-trees exist for $\beta < \kappa(T)$:

\begin{fact}[{\cite[VII 3.7(2)]{sh:c}}]\label{factsh}
    Suppose $T$ is stable. If $\beta < \kappa(T)$, then $T$ has a uniform $\beta$-tree.
\end{fact}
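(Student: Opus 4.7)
My plan is to extract from $\beta < \kappa(T)$ a forking chain and convert it into the tree by an automorphism-threading recursion. Precisely, the definition of $\kappa(T)$ furnishes an $\subseteq$-increasing chain $\langle A_\alpha : \alpha \leq \beta \rangle$ of small sets and a type $p \in \gS\bigl(\bigcup_{\alpha<\beta} A_\alpha\bigr)$ that forks over $A_\alpha$ for every $\alpha<\beta$. Since forking equals dividing in stable theories, for each successor $\alpha+1<\beta$ I can pick a formula $\psi_{\alpha+1}(\bx, \bb_{\alpha+1}) \in p$ with parameters $\bb_{\alpha+1} \in A_{\alpha+1}$ that divides over $A_\alpha$, together with an $A_\alpha$-indiscernible sequence $\langle \bb^\iota_{\alpha+1} : \iota<\theta\rangle$ starting at $\bb_{\alpha+1}$ whose instances $\{\psi_{\alpha+1}(\bx, \bb^\iota_{\alpha+1}) : \iota<\theta\}$ are $k_{\alpha+1}$-inconsistent for some finite $k_{\alpha+1}$.

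Next I would assemble the tree $\{\ba_\eta : \eta \in {}^{\leq\beta}\theta\}$ by recursion on $\alpha \leq \beta$ along branches, carrying automorphisms $\sigma_\eta \in \operatorname{Aut}(\mon/A_0)$ that send $\bb_{\gamma+1}$ to $\ba_{\eta\upharpoonright(\gamma+1)}$ for each $\gamma < |\eta|$. At a successor step, $A_\alpha$-indiscernibility of $\langle \bb^\iota_{\alpha+1}\rangle$ lets me extend $\sigma_\eta$ to $\sigma_{\eta^\wedge\iota}$ sending $\bb_{\alpha+1}$ to $\bb^\iota_{\alpha+1}$, and I set $\ba_{\eta^\wedge\iota} := \sigma_{\eta^\wedge\iota}(\bb_{\alpha+1})$. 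At limit $\alpha \leq \beta$ the maps $\sigma_{\eta\upharpoonright\gamma}$ ($\gamma<\alpha$) cohere into an elementary map on $\bigcup_{\gamma<\alpha} \mathrm{dom}(\sigma_{\eta\upharpoonright\gamma})$, which I extend to an automorphism $\sigma_\eta$; for $\eta \in {}^\beta\theta$ I take $\ba_\eta$ to realise $\sigma_\eta(p)$, which is consistent because $p$ is. By construction $\ba_\eta$ then satisfies $\psi_{\alpha+1}(\bx, \ba_{\eta\upharpoonright(\alpha+1)})$ for every successor $\alpha+1<\beta$, giving conditions (1) and (2) of Definition \ref{uniform-trees}.

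The main obstacle is condition (3): the sibling formulas at a node must be pairwise (i.e.\ $2$-) inconsistent, not merely $k_{\alpha+1}$-inconsistent. This upgrade from $k$- to $2$-inconsistency is exactly where the full strength of stability, as opposed to simplicity, is used. Concretely, I would refine $\psi_{\alpha+1}$ via the finite equivalence relation theorem and bounded multiplicity of strong types in stable theories: by conjoining $\psi_{\alpha+1}$ with a definable class-label for a suitable $A_\alpha$-definable finite equivalence relation $E_{\alpha+1}$ and simultaneously thinning the Morley sequence to representatives of distinct $E_{\alpha+1}$-classes, the refined formula $\varphi_{\alpha+1}$ can be arranged so that $\varphi_{\alpha+1}(\bx, \bb^\iota_{\alpha+1}) \wedge \varphi_{\alpha+1}(\bx, \bb^{\iota'}_{\alpha+1})$ is inconsistent for $\iota \neq \iota'$. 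Everything else in the argument is routine bookkeeping on the tree; this engineering of $2$-inconsistency is the conceptual crux of the proof.
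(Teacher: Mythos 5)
The paper does not actually prove this statement: it is imported wholesale from Shelah's Classification Theory \cite[VII 3.7(2)]{sh:c}, so there is no in-paper argument to compare against, and your attempt has to be judged on its own. Your skeleton --- extract from $\beta<\kappa(T)$ a chain $\langle A_\alpha\rangle$ with a type forking, hence dividing, over each level, and thread the dividing witnesses into a tree by automorphisms --- is the standard route, and matches the paper's own remark that Shelah builds these trees ``from first order dividing''. One bookkeeping caveat: with the definition of $\kappa(T)$ as stated in Definition \ref{fo-kappa-def} you only get that $p$ forks over each $A_\alpha$, which does not by itself place a dividing formula's parameters inside $A_{\alpha+1}$; you need the equivalent ``$p\upharpoonright A_{\alpha+1}$ forks over $A_\alpha$'' formulation (the paper notes the equivalence) before you can choose $\bb_{\alpha+1}\in A_{\alpha+1}$.

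The genuine gap is in your third paragraph, the step you yourself identify as the crux, and as written it does not work. All terms of an infinite $A_\alpha$-indiscernible sequence are pairwise equivalent under every $A_\alpha$-definable finite equivalence relation (otherwise the relation would have infinitely many classes), so they realise the same strong type over $A_\alpha$; ``thinning the Morley sequence to representatives of distinct $E_{\alpha+1}$-classes'' therefore leaves a single term and destroys the tree. If instead $E_{\alpha+1}$ is meant to act on the $\bx$-sort, conjoining class-labels produces only finitely many pairwise inconsistent formulas, whereas condition (3) of Definition \ref{uniform-trees} must hold for $\theta$-many siblings with $\theta$ infinite. Moreover, any genuine repair (e.g.\ grouping the $k_{\alpha+1}$-inconsistent sequence into disjoint blocks of size $k_{\alpha+1}-1$ and conjoining, or working with local ranks and explicitly contradictory extensions) creates a second problem you do not address: once $\psi_{\alpha+1}$ is replaced by a refined $\varphi_{\alpha+1}$, the fact that $\psi_{\alpha+1}(\bx,\bb_{\alpha+1})\in p$ no longer guarantees that the branch set $\{\varphi_{\gamma+1}(\bx,\bb_{\gamma+1}):\gamma+1<\beta\}$ is consistent, which is exactly what condition (2) requires of $\ba_\eta$. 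These two points are where the real content of \cite[VII 3.7(2)]{sh:c} lives, so your proposal should be read as a correct outline with the decisive step missing.
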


We will also use that high cofinality unions of saturated models retain saturation. This was originally noted by Harnik as a by-product his proof of \cite[2.1]{harnik} (pointed out to him by Shelah); a more explicit proof can be found in {\cite[III.3.11]{sh:c}}.

\begin{fact}\label{fo-long-saturated-lims-are-saturated}
    Suppose $T$ is stable. If $\langle M_i : i < \delta \rangle$ is an elementary chain of $\lambda$-saturated models where $\cof(\delta) \geq \kappa(T)$, then $\bigcup_{i < \delta} M_i$ is $\lambda$-saturated.
\end{fact}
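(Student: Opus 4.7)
The plan is to use two ingredients: the defining property of $\kappa(T)$ (Definition~\ref{fo-kappa-def}), which bounds how far up a $\subseteq$-increasing chain one must go to localize non-forking, together with the stability-theoretic fact that types over models are stationary and definable over a small canonical base. Combined with the $\lambda$-saturation of each link $M_i$, this should suffice to realize any $p \in \gS(A)$ with $A \subseteq N := \bigcup_{i<\delta} M_i$ and $|A|<\lambda$.

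First, I would pass to a cofinal subsequence of $\langle M_i : i < \delta\rangle$ of length $\cof(\delta)$; this preserves both the union $N$ and the $\lambda$-saturation of each link, so we may assume $\delta$ is regular with $\delta \geq \kappa(T)$. The easy sub-case is $|A| < \delta$: for each $a \in A$ pick $i(a) < \delta$ with $a \in M_{i(a)}$; by regularity of $\delta$ we have $\sup_{a \in A} i(a) < \delta$, so $A \subseteq M_j$ for some $j < \delta$, and $\lambda$-saturation of $M_j$ realizes $p$ in $M_j \subseteq N$.

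The harder sub-case is $|A| \geq \delta$, where $A$ need not lie in any single $M_j$. Here I would extend $p$ to some $p^* \in \gS(N)$ and apply Definition~\ref{fo-kappa-def} to the $\subseteq$-increasing chain of parameter sets $\langle M_i : i < \delta\rangle$ (using $\delta \geq \kappa(T)$) to obtain $i^* < \delta$ such that $p^*$ does not fork over $M_{i^*}$. Then $q := p^* \upharpoonright M_{i^*}$ is a type over a model in a stable theory, hence stationary and definable over a canonical base $C$ with parameters in $M_{i^*}$ of size at most $|T|$, and $p^*$ is its unique non-forking extension to $N$.

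The main obstacle is then converting the existence of a realization of $p^*$ in the monster model into a realization of $p$ inside $N$ itself in this harder sub-case; unlike in the easy sub-case, no single $M_j$ contains all of $A$. My plan is to leverage the $C$-definability of $p^*$: for each formula $\varphi(x,y)$ and each $\bar a \in A$, membership of $\varphi(x,\bar a)$ in $p$ is controlled by a formula with parameters in $C$, which reduces realizing $p$ to realizing a suitable type based at the small set $C$. Combining this definability schema with $\lambda$-saturation of $M_{i^*}$ (applied to the type recording $q \upharpoonright C$ plus the necessary non-forking constraints coming from $A$) and uniqueness of non-forking extensions should produce a realization of $p$ inside $N$; carefully carrying out this reduction — in particular verifying that the extracted element has the correct type over the portion of $A$ lying outside $M_{i^*}$ — is the technical core of the argument.
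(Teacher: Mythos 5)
The paper does not actually prove this Fact --- it is quoted from Harnik and \cite[III.3.11]{sh:c} --- so I am comparing your proposal against that standard argument, whose skeleton you have correctly reproduced: reduce to regular $\delta$, dispose of the case $|A| < \delta$ by placing $A$ inside a single link, and otherwise use local character along the chain to find $i^*$ with $p^*$ non-forking over $M_{i^*}$, then exploit stationarity and definability of $q = p^* \upharpoonright M_{i^*}$. (One minor point: Definition \ref{fo-kappa-def} speaks of chains of length exactly $\kappa(T)$, so extracting $i^*$ from a chain of regular length $\delta \geq \kappa(T)$ requires the routine finite-character step: if $p^*$ forked over every $M_i$, close a length-$\kappa(T)$ subchain under the finite witnesses of forking and contradict the definition.)

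The genuine gap is in the step you yourself flag as the technical core, and the plan you sketch for it does not work as stated. Definability of $p^*$ over a small $C \subseteq M_{i^*}$ tells you \emph{which} formulas lie in $p$, but it does not reduce realizing $p$ to realizing a type over $C$: a realization of $q \upharpoonright C$, even one lying in $M_{i^*}$, need not realize $p$, since its type over $A$ is not determined by its type over $C$. Nor can you fold ``non-forking constraints coming from $A$'' into a syntactic type and appeal to saturation --- non-forking is not a type-definable condition. The missing idea is to control $\tp(A/M_{i^*})$ as well: enlarge $C$ to a set (most cleanly, a model $M' \preccurlyeq M_{i^*}$) of size at most $|A| + |T| < \lambda$ over which \emph{both} $q$ and $\tp(A/M_{i^*})$ do not fork; this is where the hypothesis $|A| < \lambda$ is really used, and it is why your bound $|C| \leq |T|$ is insufficient. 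Then any $c' \in M_{i^*}$ realizing $q \upharpoonright C$ (which exists by $\lambda$-saturation of $M_{i^*}$, as $|C| < \lambda$) is automatically independent from $A$ over $C$, by symmetry and monotonicity applied to the fact that $\tp(A/M_{i^*})$ does not fork over $C$. Meanwhile, for $c \models p^*$ in the monster, $\tp(c/C \cup A)$ is also a non-forking extension of the stationary type $q \upharpoonright C$ (transitivity: $c$ is independent from $M_{i^*}$ over $C$ and from $A$ over $M_{i^*}$). Stationarity then forces $\tp(c'/C \cup A) = \tp(c/C \cup A) \supseteq p$, so $c'$ realizes $p$ inside the single link $M_{i^*} \subseteq \bigcup_{i<\delta} M_i$. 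Without this symmetry/domination step your argument does not close.
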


\section{Main results}\label{maintheoremsection}

First, we show that `high limits are isomorphic'.

\begin{lemma}\label{fo-long-lims-are-iso}
    Let $T$ be a complete first order theory stable in $\lambda \geq |T|$. If $\kappa_r(T) \leq \mu < \lambda^+$ and $\mu$ is regular, then any $(\lambda, \mu)$-limit model is saturated. Hence if $\kappa_r(T) \leq \mu_1 \leq \mu_2 < \lambda^+$, where $\mu_1$ and $\mu_2$ are both regular, then the $(\lambda, \mu_1)$-limit model and the $(\lambda, \mu_2)$-limit model are isomorphic.
\end{lemma}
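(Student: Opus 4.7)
The plan is to exhibit one specific $(\lambda, \mu)$-limit model that is already saturated, and then invoke Fact \ref{cfiso} to transport saturation to every $(\lambda, \mu)$-limit. The saturated exemplar will come from picking a witnessing chain whose successor models are themselves saturated of size $\lambda$.

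Concretely, I would build an increasing continuous chain $\langle M_i : i \leq \mu \rangle$ of size-$\lambda$ models, choosing each $M_{i+1}$ to be a saturated elementary extension of $M_i$ at every successor step. Saturated extensions of size $\lambda$ exist because $T$ is $\lambda$-stable (the standard construction of a saturated model of size $\lambda$, started above $M_i$), and a saturated model of size $\lambda$ is universal over every elementary submodel of size $\lambda$ --- a classical first-order fact proved by back-and-forth using saturation. This second property delivers $M_i \lk^u M_{i+1}$, so the chain is a valid witness that $M_\mu$ is a $(\lambda, \mu)$-limit.

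Now look at the subchain $\langle M_{i+1} : i < \mu \rangle$, which is cofinal in $M_\mu$. Its terms are $\lambda$-saturated, and $\mu$ is regular with $\mu \geq \kappa_r(T) \geq \kappa(T)$, so Fact \ref{fo-long-saturated-lims-are-saturated} applies and tells us that $M_\mu = \bigcup_{i<\mu} M_{i+1}$ is $\lambda$-saturated; since $\|M_\mu\| = \lambda$, this means $M_\mu$ is saturated. By Fact \ref{cfiso}, every $(\lambda, \mu)$-limit is isomorphic to this particular $M_\mu$, hence saturated. The `hence' clause then follows because saturated models of the same size over a complete theory are isomorphic: both the $(\lambda, \mu_1)$- and $(\lambda, \mu_2)$-limit models are saturated of size $\lambda$, so they are isomorphic.

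I do not anticipate a serious obstacle here: every ingredient (existence of saturated models of size $\lambda$ in a $\lambda$-stable theory, universality of saturated models over size-$\lambda$ submodels, and Harnik's preservation result Fact \ref{fo-long-saturated-lims-are-saturated}) is a standard first-order tool. This is precisely why the introduction advertises this direction as the `straightforward' one --- all of the genuine work in the paper lies in the companion theorem showing short limits are non-isomorphic.
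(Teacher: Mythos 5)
Your overall strategy is the same as the paper's (build a witnessing chain whose successor models are saturated, apply Fact \ref{fo-long-saturated-lims-are-saturated} to the cofinal subchain, transfer via Fact \ref{cfiso}, and conclude with uniqueness of saturated models), but there is one genuine gap: the claim that a saturated model of size $\lambda$ is universal over \emph{every} elementary submodel of size $\lambda$, ``proved by back-and-forth using saturation,'' is false. The back-and-forth you have in mind must realise types over $M_i \cup \{a_j : j < \text{(stage)}\}$, a set of size $\lambda$, whereas $\lambda$-saturation only handles sets of size $<\lambda$. Indeed no infinite model $N$ is universal over itself: there is always a complete type over $N$ extending $\{x \neq a : a \in N\} \cup \{\text{nonalgebraic conditions}\}$ that $N$ omits, yet any embedding of a size-$\lambda$ extension realising it back into $N$ over $N$ would realise it in $N$. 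Concretely, in the ($\omega$-stable) theory of an equivalence relation with infinitely many infinite classes, the countable saturated model is a saturated elementary extension of itself but is not universal over itself, since it omits the type of an element in a new class. So ``choose $M_{i+1}$ saturated extending $M_i$'' does not by itself yield $M_i \preccurlyeq^u M_{i+1}$, and your chain need not witness that $M_\mu$ is a $(\lambda,\mu)$-limit model.

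The repair is small and is exactly what the paper does: at each successor step first take $M_i'$ universal over $M_i$ of size $\lambda$ (Fact \ref{limits-exist}, whose proof realises all types over the full size-$\lambda$ set $M_i$ by iterating $\lambda$ times under $\lambda$-stability), and then extend $M_i'$ to a saturated $M_{i+1}$ of size $\lambda$ (\cite[III.3.12]{sh:c}). Universality over $M_i$ is inherited by any larger extension, so $M_i \preccurlyeq^u M_{i+1}$ holds and $M_{i+1}$ is saturated. With that step corrected, the remainder of your argument --- saturation of $M_\mu$ via Harnik's preservation fact, transfer to all $(\lambda,\mu)$-limits via Fact \ref{cfiso}, and the final isomorphism from uniqueness of saturated models of the same cardinality --- coincides with the paper's proof and is correct.
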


\begin{proof}
    Let $M$ be a $(\lambda, \mu)$-limit model. Build a continuous sequence $\langle M_i : i \leq \mu \rangle$ such that for all $i < \mu$, $M_{i+1}$ is saturated and universal over $M_i$. This is possible because under stability, given $M_i$ we can find a universal extension $M_i'$ of size $\lambda$ by Fact \ref{limits-exist}, and then find saturated $M_{i+1}$ extending $M_i'$ of size $\lambda$ by \cite[III.3.12]{sh:c} (note we still have $M_i \preccurlyeq^u M_{i+1}$). As $M_\mu$ is a $(\lambda, \mu)$-limit model, we can assume $M = M_\mu$ by Fact \ref{cfiso}. Then $M = \bigcup_{i < \mu} M_{i+1}$, and since $\mu \geq \kappa_r(T) \geq \kappa(T)$, $M$ is saturated by Fact \ref{fo-long-saturated-lims-are-saturated}.

    The latter part of the statement follows from \cite[I.1.11]{sh:c}, which says that saturated models of the same cardinality are isomorphic.
\end{proof}

Now we prove that `short limits are non-isomorphic'. The argument is inspired by \cite[6.1]{gvv}. 

\begin{theorem}\label{fo-short-models-are-distinct}
    Let $T$ be a complete first order theory stable in $\lambda \geq |T|$. Let $\mu_1, \mu_2$ be regular infinite cardinals where $\mu_1 < \kappa_r(T)$ and $\mu_1 < \mu_2 < \lambda^+$. Then the $(\lambda, \mu_1)$-limit model and $(\lambda, \mu_2)$-limit model are not isomorphic. 
\end{theorem}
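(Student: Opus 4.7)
The plan is to exhibit a $(\lambda, \mu_1)$-limit model that fails to be $\mu_1^+$-saturated. Since Fact \ref{long-limits-are-saturated} already forces the $(\lambda, \mu_2)$-limit model to be $\mu_2$-saturated, and $\mu_1 < \mu_2$ (both regular cardinals) implies $\mu_1^+ \leq \mu_2$, the two models cannot then be isomorphic; Fact \ref{cfiso} will carry the failure across all $(\lambda, \mu_1)$-limit models. The obstruction to saturation will be read off from a uniform tree: since $\mu_1$ is regular with $\mu_1 < \kappa_r(T)$, we have $\mu_1 < \kappa(T)$, so Fact \ref{factsh} supplies a uniform $\mu_1$-tree, and by Remark \ref{uniform-trees}(1) I may take its branching parameter to be $\theta = \lambda^+$. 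Fix the tree data $\{\varphi_\alpha : \alpha < \mu_1, \alpha \text{ successor}\}$ together with $\{\ba_\eta : \eta \in {}^{\leq \mu_1}\lambda^+\}$.

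I would then construct, by induction on $i \leq \mu_1$, a continuous $\preccurlyeq^u$-increasing chain $\langle M_i : i \leq \mu_1\rangle$ of models of size $\lambda$, together with a branch $\eta \in {}^{\mu_1}\lambda^+$, such that for each successor $i+1 < \mu_1$:
\begin{enumerate}
    \item[(i)] $\ba_{\eta \upharpoonright (i+1)} \in M_{i+1}$;
    \item[(ii)] no element of $M_i$ satisfies $\varphi_{i+1}(\bx, \ba_{\eta \upharpoonright (i+1)})$.
\end{enumerate}
Limit stages are just unions. At the successor stage, with $M_i$ and $\eta \upharpoonright i$ already fixed, condition (3) of Definition \ref{uniform-trees} tells us that each element of $M_i$ satisfies at most one formula $\varphi_{i+1}(\bx, \ba_{(\eta \upharpoonright i)^\wedge j})$ as $j$ ranges over $\lambda^+$, so the set of ``bad'' values of $j$ has cardinality at most $\|M_i\| = \lambda < \lambda^+$; picking $\eta(i)$ outside this set secures (ii). For (i), I enlarge $M_i$ via L\"owenheim--Skolem to a model of size $\lambda$ containing $\ba_{\eta \upharpoonright (i+1)}$, then invoke Fact \ref{limits-exist} to find a universal extension $M_{i+1}$ of size $\lambda$; this $M_{i+1}$ remains universal over $M_i$.

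At the end $M_{\mu_1}$ is a $(\lambda, \mu_1)$-limit model. Set $A := \{\ba_{\eta \upharpoonright (i+1)} : i+1 < \mu_1\} \subseteq M_{\mu_1}$, so $|A| \leq \mu_1$, and $p := \{\varphi_{i+1}(\bx, \ba_{\eta \upharpoonright (i+1)}) : i+1 < \mu_1\}$. By condition (2) of the tree, $\ba_\eta$ realises $p$ in $\mon$, so $p$ is consistent over $A$. But no $b \in M_{\mu_1}$ can realise $p$: by regularity of $\mu_1$ any such $b$ lies in some $M_i$, and (ii) then forbids $b$ from satisfying $\varphi_{i+1}(\bx, \ba_{\eta \upharpoonright (i+1)}) \in p$. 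Hence $M_{\mu_1}$ omits $p$ and is not $\mu_1^+$-saturated, finishing the theorem. The main delicacy I anticipate is the scheduling inside the successor step: $\eta(i)$ must be chosen \emph{after} $M_i$ (so the cardinality argument against the $\lambda^+$ candidates can run) but \emph{before} $M_{i+1}$ (so $\ba_{\eta \upharpoonright (i+1)}$ can be absorbed), and decoupling these via the ``enlarge, then universal-extend'' split is what keeps the construction clean.
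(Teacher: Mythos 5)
There is a genuine gap at the heart of your successor step. You invoke condition (3) of Definition \ref{uniform-trees} to conclude that ``each element of $M_i$ satisfies at most one formula $\varphi_{i+1}(\bx, \ba_{(\eta \upharpoonright i)^\wedge j})$ as $j$ ranges over $\lambda^+$,'' but condition (3) quantifies only over the tree elements $\ba_\nu$ with $\nu \in {}^{\mu_1}\theta$: it says nothing about arbitrary tuples of the monster model, and the elements of $M_i$ need not be of the form $\ba_\nu$. Nothing in the definition prevents a single element of $M_i$ from satisfying $\varphi_{i+1}(\bx, \ba_{(\eta\upharpoonright i)^\wedge j})$ for \emph{every} $j < \lambda^+$, in which case your ``bad set'' is all of $\lambda^+$ and no choice of $\eta(i)$ secures (ii). Your argument would go through if the sibling formulas $\{\varphi_{i+1}(\bx, \ba_{(\eta\upharpoonright i)^\wedge j}) : j < \theta\}$ were pairwise (or $k$-wise) inconsistent --- a property that does hold in Shelah's actual construction of these trees from dividing --- but that is strictly stronger than Definition \ref{uniform-trees}(3) and is not what Fact \ref{factsh} as stated provides.

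The paper's proof avoids this by never applying condition (3) to anything but the branch element $\ba_\eta$ itself (which \emph{is} a leaf of the tree, so the condition legitimately applies and yields equation ($\dagger$)). The obstruction to saturation is instead manufactured by a pigeonhole on types: taking $\theta = (2^\lambda)^+$, at each successor stage one finds two siblings $\ba_{\eta_{\alpha+1}}$ and $\ba_{\nu_{\alpha+1}}$ realising the \emph{same} type over the current model, and the omitted type consists of the conjunctions $\varphi_{\alpha+1}(\bx, \ba_{\eta_{\alpha+1}}) \land \neg\varphi_{\alpha+1}(\bx, \ba_{\nu_{\alpha+1}})$. Any realisation in some $M_{\alpha+1}$ would be a parameter separating the types of the two siblings over $M_{\alpha+1}$, a contradiction; consistency of the type is witnessed by $\ba_\eta$ via ($\dagger$). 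If you want to salvage your route, you must either prove the $k$-inconsistency of the sibling formulas from the construction behind Fact \ref{factsh} (not from the definition), or switch to the two-sibling, same-type device.
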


\begin{proof}[Proof of Theorem \ref{fo-short-models-are-distinct}]
    Let $\mon$ is the monster model of $T$. We will show that the $(\lambda, \mu_1)$-limit is not $\mu_2$-saturated. Since the $(\lambda, \mu_2)$-limit is $\mu_2$-saturated, this is enough. 
    
    Let $\theta = (2^\lambda)^+$. Since $\mu_1 < \kappa_r(T)$ is regular, $\mu_1 < \kappa(T)$, so by Fact \ref{factsh} there exists a uniform $\mu_1$-tree comprised of $\{ \varphi_\alpha : \alpha < \mu_1, \alpha \text{ successor}\}$ and $\{\ba_\eta : \eta \in {}^{\leq \mu_1}\theta\}$. Conditions (2) and (3) of Definition \ref{uniform-trees} imply that for all $\eta \in {}^{\mu_1}\theta, \alpha < \mu_1, i < \theta$, 
    \begin{equation}\label{tree_eq}\tag{$\dagger$}
    \mon \models \varphi_{\alpha+1}(\ba_\eta, \ba_{(\eta \upharpoonright \alpha)^{\wedge} i}) \iff \eta(\alpha) = i
    \end{equation}

    Now we construct an increasing continuous sequence of models $\langle M_\alpha : \alpha < \mu_1 \rangle$, an increasing continuous sequence of maps $\langle \eta_\alpha : \alpha < \mu_1 \rangle$ in ${}^{< \mu_1}\theta$, and another (not necessarily increasing or continuous) sequence of maps $\langle \nu_\alpha ; \alpha < \mu_1 \rangle$ in ${}^{\leq \mu_1}\theta$ such that for all $\alpha < \mu_1$
    \begin{enumerate}
        \item $M_{\alpha + 1}$ is universal over $M_\alpha$ and $\|M_\alpha\| = \lambda$
        \item $\eta_\alpha, \nu_\alpha \in {}^\alpha \theta$, and $\ba_{\eta_\alpha}, \ba_{\nu_ \alpha} \in M_{\alpha + 1}$
        \item $\eta_{\alpha+1}\upharpoonright \alpha = \nu_{\alpha+1}\upharpoonright \alpha = \eta_\alpha$, and $\eta_{\alpha+1}(\alpha) \neq \nu_{\alpha+1}(\alpha)$
        \item $\operatorname{tp}({\ba_{\eta_\alpha}}/M_\alpha) = \operatorname{tp}({\ba_{\nu_\alpha}}/M_\alpha)$ when $\alpha$ is a successor.
    \end{enumerate}

    \vspace{5px}

    For some intuition, think of $\eta = \eta_{\mu_1}$ as the main `branch' up the tree, and the $\nu_\alpha$ as `offshoots' or `thorns' of the main branch corresponding to the tuples $\ba_{\nu_\alpha}$ that have the same type as the main branch $\ba_{\eta_\alpha}$  over $M_\alpha$. Note $\ba_{\eta}$ believes the formulas $\varphi_\alpha$ along the main branch, but on none of the thorns, by (\ref{tree_eq}). The type that encapsulates this `branch with thorns', the type of $\ba_\eta$ over all $\ba_{\eta_\alpha}$ and $\ba_{\nu_\alpha}$ for $\alpha < \mu_1$, will be the one we show is omitted in $M_{\mu_1}$.
    
    \textbf{This is possible:} For $\alpha = 0$, let $M_0$ be any model of size $\lambda$, and let $\eta_0 = \nu_0 = \langle \rangle$.
    
    For limit $\alpha$, take $M_\alpha = \bigcup_{\beta < \alpha} M_\beta$, $\eta_\alpha = \bigcup_{\beta < \alpha} \eta_\beta$, and $\nu_\alpha \in {}^\alpha \theta$ (the choice of $\nu_\alpha$ doesn't matter).

    For successor $\alpha = \beta+1$, suppose we have defined $M_\beta, \eta_\beta, \nu_\beta$. Take $M_\alpha$ any model universal over $M_\beta$ containing $\ba_{\eta_\beta}$ and $\ba_{\nu_\beta}$. Now $|S(M_\beta)| \leq 2^\lambda < \theta$ (in fact we could have used $\theta = \lambda^+$ by stability in $\lambda$). So there exist $i \neq j < \theta$ such that $\operatorname{tp}(\ba_{\eta_{\beta}{}^{\wedge}i}/M_\beta) = \operatorname{tp}(\ba_{\eta_{\beta}{}^{\wedge}j}/M_\beta)$. Let $\eta_\alpha = \eta_{\beta}{}^{\wedge}i$ and $\nu_\alpha = \eta_{\beta}{}^{\wedge}j$. This completes the construction.

    \vspace{10pt}

    \textbf{This is enough:} Now, let $M = \bigcup_{\alpha < \mu_1} M_\alpha$ and $\eta = \bigcup_{\alpha < \mu_1} \eta_\alpha$. Note that $M$ is a $(\lambda, \mu_1)$-limit model by condition (1) of the construction. Let $p = \{\varphi_{\alpha+1}(\bx, \ba_{\eta_{\alpha+1}}) \land \neg \varphi_{\alpha+1}(\bx, \ba_{\nu_{\alpha+1}}) : \alpha < \mu_1 \}$. By Equation (\ref{tree_eq}), $\ba_\eta \models p$, so $p$ is consistent. This type's domain is of cardinality at most $\mu_1 < \mu_2$, so it is enough to show $M$ fails to realise $p$.
    
    Suppose for contradiction that $p$ is realised in $M$, say $\bb \in M$ and $\bb \models p$. Then there is some $\alpha < \mu_1$ such that $\bb \in M_{\alpha+1}$. As $\bb \models p$, $\mon \models \varphi_{\alpha+1}(\bb, \ba_{\eta_{\alpha+1}})$ and $\mon \models \neg \varphi_{\alpha+1}(\bb, \ba_{\nu_{\alpha+1}})$. As $\bb \in M_{\alpha + 1}$, this implies $\operatorname{tp}(\ba_{\eta_{\alpha+1}}/M_{\alpha+1}) \neq \operatorname{tp}(\ba_{\nu_{\alpha+1}}/M_{\alpha+1})$, which contradicts condition (4) of our construction.
    
    Hence $p$ is not realised in $M$. So the $(\lambda, \mu_1)$-limit model is not $\mu_2$-saturated, as claimed, and therefore is not isomorphic to the $(\lambda, \mu_2)$-limit model by Fact \ref{long-limits-are-saturated}.
\end{proof}

The following is our full characterisation of the spectrum of limit models for a stable theory. 

\begin{corollary}\label{fo-main-result}
    Let $T$ be a $\lambda$-stable complete theory where $\lambda \geq |T|$. Suppose $T$ is $\lambda$-stable where $\lambda \geq |T|$. Suppose $\delta_1, \delta_2 < \lambda^+$ are limit ordinals with $\cof(\delta_1) < \cof(\delta_2)$. Then for any $N_1, N_2\models T$ where $N_l$ is a $(\lambda, \delta_l)$-limit model for $l = 1, 2$, 
    \[N_1 \text{ is isomorphic to } N_2 \iff \cof(\delta_1) \geq \kappa_r(T)\]
\end{corollary}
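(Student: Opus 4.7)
The plan is to bootstrap the corollary from the three earlier ingredients: Fact \ref{cfiso}, Lemma \ref{fo-long-lims-are-iso}, and Theorem \ref{fo-short-models-are-distinct}, with essentially no new work. The first step will be to reduce to the case where both $\delta_1, \delta_2$ are regular cardinals. Since $\cof(\delta_l)$ is regular and $\cof(\delta_l) \leq \delta_l < \lambda^+$, Fact \ref{cfiso} lets me replace each $N_l$ with a $(\lambda, \mu_l)$-limit model, where $\mu_l$ denotes $\cof(\delta_l)$; the hypothesis $\cof(\delta_1) < \cof(\delta_2)$ becomes $\mu_1 < \mu_2$, with both regular and below $\lambda^+$.

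Next, for the \emph{if} direction, assuming $\mu_1 \geq \kappa_r(T)$, both $\mu_1$ and $\mu_2$ satisfy the hypothesis of Lemma \ref{fo-long-lims-are-iso}, which gives that the two limit models are isomorphic (either directly from its second sentence, or by applying its first sentence twice and invoking uniqueness of saturated models of a given cardinality). For the \emph{only if} direction, assuming $\mu_1 < \kappa_r(T)$, Theorem \ref{fo-short-models-are-distinct} applies verbatim with the cardinals $\mu_1 < \mu_2 < \lambda^+$ to give non-isomorphism.

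There is essentially no obstacle here; the entire content is in the two named results of this section, and the corollary is just their combination after the reduction via Fact \ref{cfiso}. The only point I would verify carefully is that the cofinalities really do lie in the range where those results apply, namely $\mu_l < \lambda^+$, which is immediate from $\delta_l < \lambda^+$.
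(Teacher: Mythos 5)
Your proposal is correct and matches the paper's own proof exactly: reduce to regular $\delta_1, \delta_2$ via Fact \ref{cfiso}, then apply Lemma \ref{fo-long-lims-are-iso} for the forward direction and Theorem \ref{fo-short-models-are-distinct} for the converse. The paper states this in two sentences; you have merely spelled out the same steps in slightly more detail.
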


\begin{proof}
    By Fact \ref{cfiso}, we may assume $\delta_1, \delta_2$ are regular. Then the result is then immediate from Lemma \ref{fo-long-lims-are-iso} and Theorem \ref{fo-short-models-are-distinct}.
\end{proof}

\begin{corollary}
    Let $T$ be a $\lambda$-stable complete theory where $\lambda \geq |T|$. If $\kappa_r(T) = \aleph_\alpha \leq \lambda$, there are exactly $|\alpha|+1$ limit models.\footnote{Here $|\alpha| + 1$ should be interpretted as cardinal addition.}
\end{corollary}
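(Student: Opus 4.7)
The plan is to convert this into pure cardinal arithmetic: use Fact \ref{cfiso} to re-index isomorphism classes of $\lambda$-limit models by regular cofinalities, use Corollary \ref{fo-main-result} to collapse all cofinalities $\geq \kappa_r(T)$ into a single class, and finally count the regular cardinals strictly below $\aleph_\alpha$.

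First I would observe that every limit ordinal $\delta < \lambda^+$ has $\cof(\delta)$ a regular cardinal $\mu$ with $\aleph_0 \leq \mu \leq \lambda$; conversely every such $\mu$ is realised by taking $\delta = \mu$, and Fact \ref{limits-exist} provides a $(\lambda,\mu)$-limit model in each case. Combined with Fact \ref{cfiso}, the isomorphism classes of $\lambda$-limit models biject with a quotient of the set $R = \{\mu \text{ regular} : \aleph_0 \leq \mu \leq \lambda\}$.

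Next, Corollary \ref{fo-main-result} identifies this quotient explicitly: all $\mu \in R$ with $\mu \geq \kappa_r(T) = \aleph_\alpha$ fall into one single class (and at least one such $\mu$ exists since $\aleph_\alpha \leq \lambda$), while the $\mu \in R$ with $\mu < \aleph_\alpha$ are pairwise non-isomorphic. So the total count equals $1 + |\{\mu \text{ regular} : \aleph_0 \leq \mu < \aleph_\alpha\}|$, and it remains to count the regular cardinals strictly below $\aleph_\alpha$.

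The counting step splits naturally into two cases. For finite $\alpha$, the regular cardinals below $\aleph_\alpha$ are precisely the successor cardinals $\aleph_0, \ldots, \aleph_{\alpha-1}$, giving exactly $\alpha = |\alpha|$ of them (including the vacuous case $\alpha=0$ covering superstable theories). For infinite $\alpha$, the injection $\beta \mapsto \aleph_{\beta+1}$ from $\{\beta : \beta+1 < \alpha\}$ into the regulars below $\aleph_\alpha$ gives at least $|\alpha|$, since that domain has cardinality $|\alpha|$ whenever $\alpha$ is infinite; and the total is bounded above by $|\alpha|$ because there are only $|\alpha|$ ordinals below $\alpha$. So the count is $|\alpha|$, and the final total is $|\alpha| + 1$. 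No deep obstacle arises; the only point requiring any care is the infinite-$\alpha$ case, where not every $\aleph_\beta$ for $\beta < \alpha$ is regular (e.g., $\aleph_\omega$) but enough of them are — namely the successor-indexed ones — to hit the required cardinality $|\alpha|$.
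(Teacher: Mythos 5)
Your proposal is correct and follows essentially the same route as the paper: partition the regular cofinalities $\mu \in [\aleph_0, \lambda]$ into singleton classes for $\mu < \kappa_r(T)$ (pairwise non-isomorphic by Theorem \ref{fo-short-models-are-distinct}) and one class for $\mu \geq \kappa_r(T)$ (all saturated, hence isomorphic by Lemma \ref{fo-long-lims-are-iso}), then count. The paper states the cardinal count of regular cardinals $\leq \aleph_\alpha$ without proof, whereas you verify it, including the existence of a limit model for each cofinality via Fact \ref{limits-exist}; this is just a more detailed write-up of the same argument.
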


\begin{proof}
    Note that there are $|\alpha| + 1$ regular cardinals $\leq \aleph_\alpha$ for every ordinal $\alpha$. The theorem shows that the limit models of these lengths are all distinct. On the other hand, all $(\lambda, \mu)$-limit models for regular $\mu \in [\aleph_\alpha, \lambda^+)$ are isomorphic to the $(\lambda, \aleph_\alpha)$-limit model by Fact \ref{fo-long-lims-are-iso}.
\end{proof}

\subsection*{Remarks}

We end with some remarks concerning the main result and it's applications.

\begin{remark}\label{weaker-description}
    We describe how our results relate to those in \cite{bema}.
    \begin{enumerate}
        \item Lemma \ref{fo-long-lims-are-iso} is a weakening of \cite[3.1]{bema}. It restricts us to the first order setting, and we cannot guarantee that the isomorphism fixes a base model, but we avoid developing the technical machinery of towers from \cite[\textsection 3]{bema}, which reduces the proof from 13 pages to a third of a page. 
        \item Similarly, Theorem \ref{fo-short-models-are-distinct} is a weakening of \cite[4.1]{bema}. It restricts us to the first order setting, but avoids the technical complication of relating (first order) non-forking to $\lambda$-non-splitting.
        \item Corollary \ref{maintheorem} is a slight weakening of \cite[6.11(4)]{bema} (we have not shown an isomorphism fixing a base model exists if the limits share the same base). \cite[6.11(4)]{bema} is itself a corollary of \cite[5.1]{bema}, which gives a characterisation of the spectrum of limit models for `nice' stable AECs with a strong enough independence notion. The advantage of our proof of Corollary \ref{maintheorem} is that we replaced the complex machinery of \cite[\textsection 3, \textsection 4, \textsection 5]{bema}, as well as the work showing that first order stable theories satisfy the conditions of \cite[5.1]{bema} in \cite[\textsection 6]{bema},  with much simpler arguments using familiar tools of first order stable theories.
    \end{enumerate}

\end{remark}

\begin{remark}\label{ramiremark}
     The method of \cite[6.1]{gvv} proved that the $(\lambda, \omega)$-limit model isn't $\aleph_1$-saturated, whereas the $(\lambda, \mu)$-limits are $\lambda$-saturated for $\mu \geq \kappa(T)$, to establish there were non-isomorphic limit models. The proof of Theorem \ref{fo-short-models-are-distinct} is similar, but we replace $\omega$ and $\lambda$ by arbitrary regular $\mu_1 < \mu_2 \leq \kappa(T)$. The trade-off is this forces us to use Fact \ref{factsh} as stated (\cite{gvv} uses a stronger version of uniform $\omega$-tree that is easily constructed from the normal kind, but this is not possible as far as we can tell for $\beta > \omega$). However, the weaker form of uniform $\beta$-tree we use is enough.
\end{remark}

\begin{remark}
    Shelah constructs uniform $\beta$-trees in \cite[VII 3.7(2)]{sh:c} (i.e. the formulas $\varphi_\alpha$ and constants $\ba_\eta$) from first order dividing. It is not clear to the author that this argument can be translated into an AEC setting using Galois types rather than syntactic types.
\end{remark}

\begin{remark}
    Corollary \ref{fo-main-result}, together with Theorem \cite[5.1]{bema} applied with $\dnf$ as first order non-forking, give an alternative proof of Lemma \cite[6.22(2)]{bema}, which essentially says that the two notions of the local character cardinal agree. Borrowing the notation of \cite[2.36]{bema} for the universal local character cardinal of an independence relation, note that $\kappa_r(T) \leq \kappa(\dnf, \operatorname{Mod}(T)_\lambda, \preccurlyeq^u)$ if and only if the $(\lambda, \kappa_r(T))$-limit model and $(\lambda,\kappa(\dnf, \operatorname{Mod}(T)_\lambda, \preccurlyeq^u))$-limit model are isomorphic by Corollary \ref{fo-main-result}. By the same argument but using \cite[5.1]{bema} in place of Corollary \ref{fo-main-result}, this holds if and only if $\kappa_r(T) \geq \kappa(\dnf, \operatorname{Mod}(T)_\lambda, \preccurlyeq^u)$. Since either $\kappa_r(T) \leq \kappa(\dnf, \operatorname{Mod}(T)_\lambda, \preccurlyeq^u)$ or $\kappa_r(T) \geq \kappa(\dnf, \operatorname{Mod}(T)_\lambda, \preccurlyeq^u)$, both must be true. Therefore $\kappa_r(T) = \kappa(\dnf, \operatorname{Mod}(T)_\lambda, \preccurlyeq^u)$.
\end{remark}

\printbibliography

\end{document}